
\documentclass[twocolumn,floatfix,reprint]{revtex4-1}
\usepackage{latexsym}
\usepackage{amsmath}
\usepackage{times}
\usepackage{amssymb}
\usepackage{fancyhdr}
\usepackage[T1]{fontenc}
\usepackage[utf8]{inputenc}
\usepackage[english]{babel}
\usepackage{fancyhdr}
\usepackage{url}
\usepackage{hyperref}
\usepackage{color}

\usepackage{graphicx}
\graphicspath{{./figures/}}




\usepackage{algorithm} 
\usepackage{algorithmic}

\usepackage{amsthm}

\def\R{\mathbb R}
\def\C{\mathbb C}

\newcommand{\e}{{\rm e}}
\newcommand{\suml}{\sum\limits}
\newcommand{\intl}{\int\limits}
\newcommand{\cF}{ {\cal F} }

\newcommand{\bone}{ \mathbf{1} }
\newcommand{\ba}{ \mathbf{a} }
\newcommand{\bW}{{\mathbf{W}}}

%


\newtheorem{thm}{Theorem}
\newtheorem{lemma}{Lemma}


\begin{document}
	

	\title{\bf Parallel numerical method for nonlocal-in-time Schr\"odinger equation}
	\author{Dmytro Sytnyk$^{1}$}\thanks{Email: sytnikd@gmail.com,  Tel.: +38 044 234 55 63}
	\affiliation{$^1$Department of Numerical Mathematics, Institute of Mathematics, National Academy of Sciences, Tereschenkivska 3, Kiev, Ukraine, 01601}
	
	\date{\today}
	
	
	\begin{abstract}
		We present a new parallel numerical method for solving the non-stationary Schr\"odinger equation with linear nonlocal condition and time-dependent potential which does not commute with the stationary part of the Hamiltonian.
		The given problem is discretized in-time using a polynomial-based collocation scheme. We establish the conditions on the existence of solution to the discretized problem, estimate the accuracy of the discretized solution and propose the method how this solution can be approximately found in an efficient parallel manner.
		  
		\noindent 
		
		\vspace*{2ex}\noindent\textit{\bf Keywords}: Schr\"odinger equation, linear nonlocal condition, collocation scheme, existence of solution, iterative approximation, Dunford-Cauchy integral, parallel numerical method.
		\\[3pt]
		\noindent\textit{\bf PACS}:  0.260, 95.10.E, 89.75.-k, 89.75.Da, 05.45.Xt, 87.18.-h 
		\\[3pt]
		\noindent\textit{\bf MSC}: 35Q41, 34C15, 35Q70, 34B10
	\end{abstract}

	\maketitle
	\thispagestyle{fancy}
\section*{Introduction}
	We present a new numerical method for solving the time-dependent Schr\"odinger equation with linear nonlocal condition
\begin{align}
{i}\frac{\partial\Psi}{\partial t}-(H+v(t))\Psi= 0, \label{eq:NCPSchrodEqt}\\
\quad \Psi(0) + \sum\limits_{k=1}^{m} \alpha_k \Psi (t_k)= \Psi_0,
\label{eq:NCPnonloc_cond} 
\end{align}
$\alpha_k \in \mathrm{C}, \ t_k \in (0, T]$, $\Psi_0 \in X$.
It is assumed that $H$ is a densely defined closed linear operator  with the domain $D(H)$ dense in a Banach space $X = X(\|\cdot\|,\Omega)$. 
The spectrum of $H$ is contained in the horizontal half-strip 
\begin{equation}\label{eq:SpHalfStrip}
\Sigma = \left\{
z=x + i y\ \middle|\ x,y \in \R,\ x\geq b_s,\ |y|\leq d_s 
\right\},
\end{equation} 
and the resolvent 
$
R\left(z,H\right) \equiv (zI-H)^{-1}
$ 
satisfies the bound
\begin{equation}\label{eq:ResHalfStrip}
\left \|R\left(z,H\right)\right \|\leq \frac{M}{|\Im{z}|-d_s},\quad z \in \Theta\setminus \Sigma,\  \Sigma \subset \Theta.
\end{equation}
The linear operator $H$ having properties \eqref{eq:SpHalfStrip},\eqref{eq:ResHalfStrip} is called a semi-bounded half-strip operator \cite{batty2012bounded}. 
The class of such operators can be viewed as a native extension of the class of Hermitian operators with a semi-bounded spectrum \cite{Sakurai1985} .
The motivation to consider equation \eqref{eq:NCPSchrodEqt} in a  Banach space setting stems from fact that the technique used in this work does not rely on the notion of inner product. 
Thus the results established here can be readily applied to the conventional quantum mechanical models with Hermitian operators as well as to the less conventional models with $\mathcal{PT}$--symmetric \cite{Bender2002} or  pseudo-Hermitian operators \cite{Mostafazadeh2002}. 
The later type of models is becoming more important due to the recent applications in nonlinear quantum optics  \cite{Chen2017}, \cite{Zyablovsky2014} and 2-D material design \cite{Bagarello2016}. 
Problem \eqref{eq:NCPSchrodEqt}, \eqref{eq:NCPnonloc_cond} has applications in the theory of non-periodic driven quantum systems, quantum computations, and the modelling of system-bath interactions in open quantum systems. The detailed discussion of the above-mentioned applications of \eqref{eq:NCPSchrodEqt}, \eqref{eq:NCPnonloc_cond} are presented in \cite{SytnykMelnik2018}. 


In the current work we consider a general situation when the potential $v(t)$ does not commute with $H$. 
As a consequence of that, the propagator  $e^{-i t (H+v(t))}$ does not commute with itself for different values of $t$.
This issue severely limits the list of analytical and numerical tools applicable to the solution operator $\exp{\left(\int\limits_{0}^t H + v(s) ds \right) }$ of \eqref{eq:NCPSchrodEqt} because  such solution operator is intractable within standard holomorphic function calculus of $H + v(t)$ \cite{batty2012bounded}. 
We refer the reader to \cite{Leforestier1991},\cite{Dijk2011} for a review of available numerical methods to solve Schr\"odinger equation \eqref{eq:NCPSchrodEqt} accompanied by the ordinary initial condition (all $\alpha_k$ from \eqref{eq:NCPnonloc_cond} are zero) and with $H$ being one- or two-dimensional scalar elliptic operator.  
Nonlocal condition \eqref{eq:NCPnonloc_cond} poses an additional issue that contributes to the complexity of the given problem. 
To our best knowledge the only available theoretical work devoted to stationary-operator version ($v(t)=0$) of \eqref{eq:NCPSchrodEqt}, \eqref{eq:NCPnonloc_cond} in its full generality is \cite{SytnykMelnik2018}. 
The particular cases of the given problem was studied in \cite{ashyralyev2008nonlocal}, \cite{Bishop2017}, \cite{bunoiu2016vectorial}. 
Numerical methods for \eqref{eq:NCPSchrodEqt}, \eqref{eq:NCPnonloc_cond} were never reported.
 
To work around the highlighted issues we transfer the time dependent part $v(s)\Psi (s)$ to the right-hand side of \eqref{eq:NCPSchrodEqt} and look for the numerical solution of the obtained problem.
The above assumptions on $H$ guarantee that $e^{-i t H}$ is bounded and any solution to 
\eqref{eq:NCPSchrodEqt} also satisfies the equation 
\begin{equation}\label{eq:NCPPropSolRepSE}
\Psi (t) = e^{-i t H}\Psi(0) +  \int\limits_{0}^{t}e^{-i\left(t-s\right)H}V(s)\Psi (s)ds,  
\end{equation}
with some  $\Psi(0)\in D(H)$, provided that the potential $V(t) \equiv -i v(t)$ is integrable on $[0,T]$ and 
there exists $\delta>1$ such that $D(H^\delta)$ is dense in $X$ (see. \cite[Section 2]{SytnykMelnik2018}). Throughout the paper we 
assume the validity of both these conditions. 

In order to discretize \eqref{eq:NCPPropSolRepSE}, \eqref{eq:NCPnonloc_cond} in-time we propose  in Section \ref{sec:discretization} a polynomial-based collocation scheme on the Chebyshev-Gauss-Lobatto  grid. 
This scheme permits us to reduce nonlocal problem \eqref{eq:NCPPropSolRepSE}, \eqref{eq:NCPnonloc_cond} to a system of linear integral equations. 
Next, we study a well-possedness of the obtained system (see Lemma \ref{thm:S_inv}, \ref{thm:matrix_est}).  
This is done using the combination of previously obtained results \cite{SytnykMelnik2018} together with some specific transformations tailored to the structure of nonlocal condition \eqref{eq:NCPnonloc_cond_transf}. 
Theorem \ref{thm:main} comprises the main result of the work. It states the conditions on the existence of solution to the discretized system and justifies the iterative method to approximate this solution. 


In Section \ref{sec:op_func_approx} we illustrate how the action of propagator $e^{-i t H}\phi$ can be efficiently approximated by the parallel numerical method proposed in \cite{schrod_num_Sytnyk2017}. 
This method reduces the sought approximation to a series of independent stationary problems
$$
(z_kI-H)\Phi = \phi, \quad z_k \in \C.  
$$
that can be solved in parallel. 
Section \ref{sec:impl_ex} is devoted to implementation of the numerical method discussed in the previous sections.
In this section we present the approximation algorithm and discuss its sequential and parallel complexities.   

\section{Discretization scheme}\label{sec:discretization}
To build a discretization scheme we perform the change of variable 
\begin{equation}\label{eq:var_transform}
t = \frac{s+1}{2}T
\end{equation}
in \eqref{eq:NCPSchrodEqt}, \eqref{eq:NCPnonloc_cond} and reduce the given problem on $t \in [0,T]$ to the equivalent problem on $s \in [-1, 1]$
\begin{align}
{i}\frac{\partial\psi}{\partial s}-H\psi = iV(s)\psi, \label{eq:NCP_transf}\\
\quad \psi(-1) + \sum\limits_{k=1}^{m} \alpha_k \psi (s_k) = \Psi_0.
\label{eq:NCPnonloc_cond_transf} 
\end{align}
The sequence of pairs $(\alpha_k, s_k)$, $\alpha_k \in \mathrm{C}, \ s_k \in (-1, 1]$, $k=1,\ldots m$ will be called  parameters of nonlocal condition.

In order to discretize the solution  to \eqref{eq:NCP_transf}, \eqref{eq:NCPnonloc_cond_transf} in-time  we introduce the Chebyshev-Gauss-Lobatto (CGL) grid 
\[
\omega_{N}=\{ s_{p}=-\cos{\frac{p\pi}{N}}, p=0,...,N \}.
\]
It is well-known \cite{pol_int_err_est_Mastroianni2008} that the nodes $s_{p} \in \omega_{N}$ are the zeros of $(1-x^2)T_N^{\prime}(x)$, where $T_N(s)=\cos{(N \arccos{s})}$ is the Chebyshev orthogonal polynomial of the  first kind.
Moreover the step-sizes $\tau_{p} \equiv s_{p}-s_{p-1}$ satisfy the inequality 
\cite[Thm. 6.11.12]{szegoe}
\begin{equation}\label{eq:tau_max}
\tau_{\text{max}}=\max_{1 \le p \le N} \tau_p <  \frac{\pi}{N}
\end{equation}
We seek the solution to \eqref{eq:NCP_transf}, \eqref{eq:NCPnonloc_cond_transf} in the form of polynomial 
\begin{equation}\label{eq:lagrange_pol}
P_{N}(s; \varphi)=\sum_{p=0}^N \varphi(s_p) L_{p}(s),
\end{equation}
where $L_{p}$, 
$p=0,...,N$
are Lagrange fundamental polynomials associated with the grid $\omega_{N}$ and $\varphi: [-1, 1] \rightarrow X$ is some unknown function.

Upon substituting \eqref{eq:lagrange_pol} into \eqref{eq:NCP_transf}
and evaluating the result on the grid $\omega_{N}$ with help of \eqref{eq:NCPPropSolRepSE}, we arrive at the following sequence of equations 
\begin{multline*}
\varphi(s_p) =  e^{-i (s_p + 1) H}P_{N}(-1;\varphi)  \\ 
+ \int\limits_{-1}^{s_p}e^{-i\left(s_p-t\right)H}V(t)P_{N}(t;\varphi)dt. 
\end{multline*}
For any $1 \leq p \leq N$ the previous equation can be rewritten as follows 
\begin{equation}
\begin{split}
\varphi(s_p) &=  e^{-i \tau_p H}\varphi(s_{p-1}) \\
 +  &\sum_{l=0}^N \int\limits_{s_{p-1}}^{s_p} e^{-i\left(s_p-t\right)H}V(t)L_{l}(t)\varphi(s_l) dt .
\end{split}
\label{eq:int_repr_seq_pol}
\end{equation}
To get \eqref{eq:int_repr_seq_pol} we used the interpolation property $P_{N}(s_p;\varphi) = \varphi(s_p)$, $p=1,\ldots,N,$ along with the fact that $H$ does not depend on time, so $e^{-i s_p H} = e^{-i s_{p-1} H}e^{-i \tau_p H}$.
Similarly, the substitution of $P_{N}(s;\varphi)$ into \eqref{eq:NCPnonloc_cond_transf} yields
\begin{equation}\label{eq:nc_transf_pol}
\varphi(s_0)+ \sum_{l=0}^N \sum\limits_{k=1}^{m} \alpha_k  L_{l}(s_k) \varphi(s_l) = \Psi_0.
\end{equation}
Equations \eqref{eq:int_repr_seq_pol}, $p = 1, \ldots, N$ and \eqref{eq:nc_transf_pol} together form a system of $N+1$  linear operator equations with respect to the unknowns $\Phi = (\varphi(s_0), \ldots, \varphi(s_N))$. 
We rewrite this system in a matrix-vector form 
\begin{equation}\label{eq:matrix_sys}
S\Phi = C \Phi + F,
\end{equation}
where
\begin{equation*}
S=
\begin{pmatrix}
 (1+ a_0)I& a_1I & a_2I & \cdot & \cdot & \cdot & a_{N-1}I & a_NI \\
-e^{-iH \tau_1} & I & 0 & \cdot & \cdot & \cdot & 0 & 0 \\
0 & -e^{-iH \tau_2} & I & \cdot & \cdot & \cdot & 0 & 0 \\
\cdot & \cdot & \cdot & \cdot & \cdot & \cdot & \cdot & \cdot \\
0 & 0 & 0 & \cdot & \cdot & \cdot & -e^{-iH \tau_{N}} & I \
\end{pmatrix},
\end{equation*}
$a_l =  \sum\limits_{k=1}^{m} \alpha_k  L_{l}(s_k)$, $C=\{\beta_{p,l}\}_{p,l=0}^N$ is the matrix with entries 
$\beta_{p,l} = \int\limits_{s_{p-1}}^{s_p} e^{-i\left(s_p-t\right)H}V(t)L_{l}(t)dt$,  $\beta_{0,l} = 0$ and $F$ is $N+1$-dimensional vector $F=(\Psi_0,0,\ldots,0)^T$.
The elements of the first row of $S$ are collected from the terms on the left of \eqref{eq:nc_transf_pol}. Other nonzero elements of $S$ come from the first two terms of \eqref{eq:int_repr_seq_pol}, when $p$ goes from  $1$ to $N$.
 
We would like to show that the solution of \eqref{eq:matrix_sys} exists for a sufficiently large $N$ and  then characterize the accuracy of that solution. 
To do so, let us introduce a vector norm 
\begin{equation}\label{eq:vec_norm_X}
|\|v\|| \equiv |\|v\||_\infty=\max_{1 \le k \le n} \|v_k\|
\end{equation}
and the corresponding matrix norm
\begin{equation}\label{eq:mat_norm_X}
|\|A\|| \equiv |\|A\||_\infty =\max_{1 \le i \le n}\sum_{j=1}^n
\|a_{i,j}\|.
\end{equation}
\begin{lemma}\label{thm:S_inv}
Assume that the operator function 
$B_N = I + \sum\limits_{l=0}^{N} a_l e^{-iH (s_l -s_0)}$
possesses a bounded inverse $B_N^{-1}$,
then the matrix $S$ is invertible and the inverse $S^{-1}$ has the following representation 
\begin{equation}\label{eq:S_inv_repr}
S^{-1} = S_1^{-1}\left (I - \bone \ba^T S_1^{-1}B_N^{-1} \right ),
\end{equation}
where 
\begin{equation}\label{eq:S1_inv}
{
\begin{split}
S_1^{-1}=
\begin{pmatrix}
I & 0&   \cdots & 0 & 0 \\
e^{-iH\tau_1} &I  &  \cdots & 0 & 0 \\
e^{-iH (s_2-s_0)} &e^{-iH \tau_2}  &  \cdots & 0 & 0 \\
\cdot & \cdot &  \cdots & \cdot & \cdot \\
e^{-iH (s_N-s_0)} & e^{-iH (s_N - s_1)} & \cdots & e^{-iH\tau_N} & I
\end{pmatrix},
\end{split}
}
\end{equation}
and $\bone = (1,0,\ldots, 0)^T$, $\ba = (a_0,\ldots, a_N)^T$ are two vectors of the same size.	
\end{lemma}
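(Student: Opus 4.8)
The plan is to regard $S$ as a rank-one block perturbation of a lower bidiagonal matrix that can be inverted in closed form, and then to apply an operator-valued Sherman--Morrison formula whose solvability condition is precisely the hypothesis on $B_N$. Concretely, let $S_1$ be the matrix obtained from $S$ by replacing its first row with $(I,0,\ldots,0)$, so that
\begin{equation*}
S = S_1 + \bone\ba^{T},
\end{equation*}
where $\bone\ba^{T}$ denotes the block matrix whose $(i,j)$ entry is $\bone_i a_j I$ (its first row is $(a_0I,\ldots,a_NI)$ and all other rows vanish); this matches the first row $((1+a_0)I,a_1I,\ldots,a_NI)$ and the bidiagonal lower part of $S$.

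First I would verify that the matrix displayed in \eqref{eq:S1_inv} is indeed the inverse of $S_1$. Since $S_1$ is lower bidiagonal with $I$ on the diagonal and $-e^{-iH\tau_p}$ on the subdiagonal, this reduces to a block multiplication whose only nontrivial input is the semigroup identity $e^{-iH\tau_p}e^{-iH(s_{p-1}-s_l)} = e^{-iH(s_p-s_l)}$, valid for all $0 \le l \le p-1$ because every exponent occurring here is nonnegative ($\tau_p = s_p - s_{p-1} > 0$ and $s_p$ is increasing). Checking both $S_1S_1^{-1}=I$ and $S_1^{-1}S_1=I$ in this way establishes that $S_1^{-1}$ is a genuine two-sided inverse and, since $e^{-iHt}$ is bounded for $t\ge 0$, that all its entries are bounded operators.

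Next I would factor $S = S_1\bigl(I + S_1^{-1}\bone\ba^{T}\bigr)$, reducing invertibility of $S$ to that of the rank-one perturbation $I + S_1^{-1}\bone\ba^{T}$. For this I would use the algebraic identity
\begin{equation*}
(I + uv^{T})^{-1} = I - u\,(I + v^{T}u)^{-1}v^{T},
\end{equation*}
which one checks directly --- by expanding $(I+uv^{T})$ times the right-hand side and the right-hand side times $(I+uv^{T})$ --- to be a two-sided inverse whenever $I + v^{T}u$ is invertible. Taking $u = S_1^{-1}\bone$, which is the first column of $S_1^{-1}$, namely $\bigl(e^{-iH(s_p-s_0)}\bigr)_{p=0}^{N}$, and $v^{T} = \ba^{T}$, one computes $I + v^{T}u = I + \ba^{T}S_1^{-1}\bone = I + \sum_{l=0}^{N} a_l e^{-iH(s_l-s_0)} = B_N$, which is invertible with bounded inverse by assumption. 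Hence $S$ is invertible and
\begin{equation*}
S^{-1} = \bigl(I - S_1^{-1}\bone\,B_N^{-1}\,\ba^{T}\bigr)S_1^{-1} = S_1^{-1} - S_1^{-1}\bone\,B_N^{-1}\,\ba^{T}S_1^{-1}.
\end{equation*}

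Finally I would bring this into the stated form \eqref{eq:S_inv_repr} by moving $B_N^{-1}$ to the right of the row vector $\ba^{T}S_1^{-1}$. Each entry of $\ba^{T}S_1^{-1}$ is a finite scalar combination of semigroup operators $e^{-iHt}$, $t\ge 0$, and $B_N$ is itself such a combination; all of these commute with one another by the semigroup property, so $B_N^{-1}$ commutes with every entry of $\ba^{T}S_1^{-1}$, giving $B_N^{-1}\ba^{T}S_1^{-1} = \ba^{T}S_1^{-1}B_N^{-1}$ and therefore $S^{-1} = S_1^{-1}\bigl(I - \bone\ba^{T}S_1^{-1}B_N^{-1}\bigr)$. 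The only delicate point is the noncommutative, operator-valued setting: the ``denominator'' in the Sherman--Morrison step is the operator $B_N$ rather than a scalar, so one must genuinely invoke the hypothesis that $B_N^{-1}$ exists and is bounded, verify two-sidedness of each inverse explicitly rather than relying on one-sided invertibility, and use the commutativity of $B_N^{-1}$ with the semigroup for the last rearrangement. I expect this bookkeeping, not any single estimate, to be the part that needs the most care.
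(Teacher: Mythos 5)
Your proposal is correct and follows essentially the same route as the paper: the decomposition $S = S_1 + \bone\ba^{T}$, the explicit inverse \eqref{eq:S1_inv} of the lower bidiagonal factor, and the Sherman--Morrison step whose solvability condition is exactly $I + \ba^{T}S_1^{-1}\bone = B_N$ being boundedly invertible. Your extra bookkeeping (two-sidedness of the inverses and commuting $B_N^{-1}$ past the functions of $H$ to reach the exact form \eqref{eq:S_inv_repr}) simply makes explicit what the paper leaves implicit.
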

\begin{proof}
To prove \eqref{eq:S_inv_repr} we notice that the matrix $S$ can be decomposed as  $S =  S_1 + \bone \ba^T $, where $S_1$ is a lower bidiagonal matrix with identity operators on the main diagonal. The matrix $\bone \ba^T$ is a rank-1 update of $S_1$. Due to its specific structure, the matrix $S_1$ is always invertible. The inverse $S_1^{-1}$ is defined by \eqref{eq:S1_inv}. Consequently, the inverse $S^{-1}$ exists and can be evaluated via the Sherman-Morrison formula \cite{Hager1989}.
It leads us to the representation
\[
S^{-1} =  S_1^{-1}\left (I - \bone \ba^T S_1^{-1}\left(I+ \ba^T S_1^{-1} \bone \right)^{-1} \right ),
\]
which defines a bounded inverse of $S$, if and only if the operator function $\left(I+ \ba^T S_1^{-1} \bone \right)^{-1}$ is bounded for the given $H$. 
By a direct calculation we get 
\[
\left(I+ \ba^T S_1^{-1} \bone \right) = I + \sum\limits_{l=0}^{N} a_l e^{-iH (s_l -s_0)} \equiv B_N.
\]
\end{proof}	
 
To understand how the function $B_N$ is related to nonlocal condition \eqref{eq:NCPnonloc_cond_transf} we need to recall some results from \cite{SytnykMelnik2018}. In the mentioned work authors studied the problem comprised of
\begin{equation} \label{eq:SchrodEqt}
{i}\frac{\partial\Psi}{\partial t}-H\Psi = iV(t), \quad t \in (0,T]
\end{equation}
and the nonlocal condition \eqref{eq:NCPnonloc_cond}, under slightly more general assumptions on $H$ than in the current work. 
 The existence and representation of solution to \eqref{eq:SchrodEqt}, \eqref{eq:NCPnonloc_cond} relies upon the boundedness of 
 \[B^{-1} = \left ( I +\suml_{k=1}^m \alpha_k e^{-i t_k H}\right)^{-1}.\]
 \begin{thm}[\cite{SytnykMelnik2018}]\label{thm:NCNS_exist_mild}
 	Let  $H$ be a closed linear operator with the spectrum $\Sigma$ contained in strip \eqref{eq:SpHalfStrip} and the domain $D(H^\delta)$ is dense in $X$ for some $\delta > 1$. 
 	The mild solution of nonlocal problem \eqref{eq:SchrodEqt}, \eqref{eq:NCPnonloc_cond} exists for any 
 	$\Psi_0 \in X$, $V \in L^1((0;T),X)$  
 	and is equivalent to the solution of Cauchy problem for \eqref{eq:SchrodEqt}, with the initial state 
 	\begin{equation}\label{eq:initial_state_NCNS}
 	\Psi(0) = B^{-1}\Psi_0 -B^{-1}\suml_{k=1}^m \alpha_k \intl_0^{t_k} e^{-i (t_k-s) H} V(s) ds,
 	\end{equation}
 	if all the zeros of entire function $b(z)$ associated with \eqref{eq:NCPnonloc_cond},
 	\begin{equation}
 	\label{eq:zerosExpI}
 	b(z)=1+\sum_{k=1}^m{\alpha_k e^{(-i t_k z)}}, 
 	\end{equation}
 	are contained in the interior of the set $\mathbb{C} \backslash \Sigma$.
 \end{thm}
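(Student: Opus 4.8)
The plan is to collapse the nonlocal problem onto an ordinary Cauchy problem by a direct substitution, and then to invert the resulting operator coefficient of the initial value via the holomorphic (Dunford--Cauchy) functional calculus of $H$. First I would record that under the stated hypotheses the propagator $e^{-itH}$ is a bounded operator on $X$ for every $t\ge 0$ --- the density of $D(H^\delta)$ with $\delta>1$ is exactly what lets one extend its Dunford--Cauchy representation from $D(H^\delta)$ to all of $X$ --- so that the mild solution of the Cauchy problem for \eqref{eq:SchrodEqt} with a prescribed initial value $\Psi(0)=\phi$ is uniquely given by the variation-of-constants formula \eqref{eq:NCPPropSolRepSE}, i.e.\ $\Psi(t)=e^{-itH}\phi+\intl_0^{t}e^{-i(t-s)H}V(s)\,ds$, which is well defined for any $\phi\in X$, $V\in L^1((0,T),X)$.

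Next I would evaluate this formula at $t=t_k$ and substitute into the nonlocal condition \eqref{eq:NCPnonloc_cond}; grouping the terms that multiply $\phi$ produces the single operator equation
\[
\Bigl(I+\suml_{k=1}^{m}\alpha_k e^{-it_kH}\Bigr)\phi
=\Psi_0-\suml_{k=1}^{m}\alpha_k\intl_0^{t_k}e^{-i(t_k-s)H}V(s)\,ds=:g ,
\]
whose right-hand side $g$ lies in $X$ because each $e^{-i(t_k-s)H}$ is bounded and $V\in L^1$. Hence a mild solution of the nonlocal problem exists and is unique precisely when $B:=I+\suml_{k=1}^{m}\alpha_k e^{-it_kH}$ has a bounded inverse, and then $\phi=B^{-1}g$ is exactly formula \eqref{eq:initial_state_NCNS}; running the computation in reverse shows the Cauchy solution built from this $\phi$ satisfies \eqref{eq:NCPnonloc_cond} and that every mild solution of the nonlocal problem arises this way, which is the claimed equivalence.

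The remaining and decisive point is to prove that $B$ is boundedly invertible exactly under the zero condition on $b$. I would note that $B=b(H)$ for the entire function $b$ of \eqref{eq:zerosExpI}, since $e^{-it_kH}$ is $z\mapsto e^{-it_kz}$ evaluated at $H$ and the calculus is linear and multiplicative. If every zero of $b$ lies in the interior of $\mathbb{C}\setminus\Sigma$, one finds an open set $\Theta'$ with $\Sigma\subset\Theta'\subset\Theta$ on which $b$ never vanishes and on which $1/b$ is holomorphic and bounded (on a slightly widened half-strip $|e^{-it_kz}|=e^{t_k\Im z}$ stays bounded, so $b$ is bounded there and bounded away from $0$); then $B^{-1}=(1/b)(H)=\frac{1}{2\pi i}\intl_{\Gamma}b(z)^{-1}R(z,H)\,dz$ over a contour $\Gamma\subset\Theta'\setminus\Sigma$ encircling $\Sigma$, and a resolvent-identity check confirms this is the two-sided inverse. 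Conversely a zero of $b$ on $\Sigma$ puts $0$ into $\sigma(B)$ via the spectral inclusion $b(\sigma(H))\subseteq\sigma(B)$, so boundedness of $B^{-1}$ fails; together with $\sigma(H)\subseteq\Sigma$ this pins down the dichotomy.

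The routine part is the bookkeeping of the first two steps and the final assembly; the hard part is the functional-calculus step, because the resolvent estimate \eqref{eq:ResHalfStrip} gives no decay in the direction along the strip, so the Dunford--Cauchy integral for $(1/b)(H)$ is not absolutely convergent for an arbitrary contour. One must shape $\Gamma$ to the geometry of the half-strip and, as for $e^{-itH}$ itself, insert a regularising factor --- a fractional power $H^{-\delta}$ with $\delta>1$, equivalently working first on the dense subspace $D(H^\delta)$ --- to recover the missing $|z|^{-\delta}$ decay that makes the integral converge in operator norm and genuinely represent the inverse. It is precisely the non-decay of $|e^{-it_kz}|=e^{t_k\Im z}$ as $\Im z\to+\infty$ that forces the zeros of $b$ to be strictly separated from $\Sigma$ rather than merely disjoint from it.
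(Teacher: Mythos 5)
Note first that the paper does not prove Theorem \ref{thm:NCNS_exist_mild} at all: it is imported verbatim from \cite{SytnykMelnik2018}, and the surrounding text only records the representation \eqref{reprDunford} of $B^{-1}$ as a Dunford--Cauchy integral of $1/b(z)$ along a contour enveloping the half-strip. Your route is exactly the one this sketch points to --- reduce to the Cauchy problem via the variation-of-constants formula, collapse the nonlocal condition to $B\Psi(0)=\Psi_0-\sum_k\alpha_k\int_0^{t_k}e^{-i(t_k-s)H}V(s)\,ds$, and invert $B$ by the regularized Dunford--Cauchy calculus, with the same correction-on-$D(H^\delta)$ device the paper itself uses in Section \ref{sec:op_func_approx} --- so in substance your proposal matches the intended proof. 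Two soft spots deserve mention. First, your parenthetical ``$b$ is bounded there and bounded away from $0$'' is a non sequitur as written: boundedness of $b$ on a widened half-strip does not give a positive lower bound, and since $\Sigma$ is unbounded, the hypothesis that the zeros of \eqref{eq:zerosExpI} lie in the interior of $\mathbb{C}\setminus\Sigma$ does not by itself yield a uniform zero-free tube around $\Sigma$; one needs the almost-periodic structure of the zero set of the exponential sum (or, as in Section \ref{sec:op_func_approx}, an explicit choice of contour keeping the zeros outside the enveloping region) to get $1/b$ bounded on the contour --- you acknowledge this separation issue at the end but do not close it. Second, your converse (``a zero of $b$ on $\Sigma$ puts $0$ into $\sigma(B)$'') is both unnecessary --- the theorem asserts only sufficiency --- and misstated, since a zero lying in $\Sigma\setminus\sigma(H)$ gives no spectral conclusion; only zeros on $\sigma(H)$ would, and even then the spectral inclusion for this unbounded calculus requires justification.
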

 We note that the entire function $b(z)$, describing the existence of the solution in terms of the parameters of nonlocal condition \eqref{eq:NCPnonloc_cond}, is connected to $B^{-1}$ via the Dunford-Cauchy integral 
 \begin{equation}
 \label{reprDunford}
 B^{-1} =\frac{1}{2\pi i} \intl_{\Gamma_I}\frac{1}{b(z)} R(z,H) dz,
 \end{equation}
where $R(z,H)$ is the resolvent of $H$, defined above.
 Hence, the operator function $B^{-1}$ is properly defined and bounded only if the conditions of Theorem \ref{thm:NCNS_exist_mild} regarding the zeros of $b(z)$ are fulfilled.  
 
Now let us get back to the definition of $B_N$. It's not hard to see that $B_N$ is the polynomial  approximation of $B$, transformed under \eqref{eq:var_transform}. This approximation converges quickly as $N$ increases, because $B$ admits holomorphic extension as a function of $s\in [-1,1]$ into the bounded set containing the interval $[-1,1]$ \cite{pol_int_err_est_Mastroianni2008}. Thus, for a sufficiently large $N$, the operator function $B_N^{-1}$ should be bounded when the conditions of Theorem \ref{thm:NCNS_exist_mild} are satisfied.  

\begin{lemma}\label{thm:matrix_est}
Suppose that the potential $V(s)$ from \eqref{eq:NCP_transf} is Lipschitz continuous 
\begin{equation}\label{eq:V_Lip}
\|V(t) - V(s)\| \leq K |t-s|, \quad \forall t,s \in [-1,1],
\end{equation} 
and $M^V=\max_{s \in [-1,1]}\|V(s)\|$, then for a large $N$ the matrices $S^{-1}, C$ and $S^{-1}C$ obey the bounds 
\begin{equation}\label{eq:S_est}
|\|S^{-1}\|| \le
M_S (N+1),
\end{equation}
\begin{equation}\label{eq:C_est}
|\|C\||\le   
\frac{M_{C}}{N+1}\left ( \frac{1}{2} M^V K^L  + \pi K \frac{\ln{(N+1)}}{N+1}\right ),
\end{equation}
\begin{equation}\label{eq:SC_est}
|\|S^{-1}C\|| \le
M_{SC}\left ( \frac{1}{2} M^V K^L  + \pi K \frac{\ln{(N+1)}}{N+1}\right ),
\end{equation}
where the positive constants $M_S$, $M_C$, $M_{SC}$  are independent of $N$,  $K^L$ is the maximum of Lipschitz constants for $L_l(t)$, $t \in [s_{p-1}, s_p]$, $l=0, \ldots, N$.
\end{lemma}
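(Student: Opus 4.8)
The plan is to establish the three bounds in the stated order: \eqref{eq:S_est} for $|\|S^{-1}\||$, then \eqref{eq:C_est} for $|\|C\||$, and finally \eqref{eq:SC_est} as an almost immediate consequence of the first two, since the matrix norm \eqref{eq:mat_norm_X} is submultiplicative, $|\|S^{-1}C\|| \le |\|S^{-1}\||\,|\|C\||$, so that the factors $(N+1)$ and $(N+1)^{-1}$ coming from \eqref{eq:S_est} and \eqref{eq:C_est} cancel and one is left with a constant $M_{SC}=M_S M_C$ independent of $N$. Throughout I write $M_e$ for a bound on $\|e^{-itH}\|$ valid uniformly for $t$ in the compact interval $[0,2]$; its existence follows from the boundedness of the propagator under the standing assumptions on $H$, and every exponential entering $S_1^{-1}$, $C$, or the correction term below carries an argument $s_i-s_j,\ s_i-s_0,\ s_p-t\in[0,2]$.

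\textbf{Bound on $|\|S^{-1}\||$.} I would start from the factorization \eqref{eq:S_inv_repr} of Lemma~\ref{thm:S_inv}. Every nonzero entry of $S_1^{-1}$ in \eqref{eq:S1_inv} is either $I$ or a propagator of norm $\le M_e$, and row $i$ contains at most $N+1$ of them; hence $|\|S_1^{-1}\|| \le M_e(N+1)$, which already produces the asserted linear growth. It remains to show that the rank-one correction $S_1^{-1}\bone\ba^{T}S_1^{-1}B_N^{-1}$ does not grow faster. Here $S_1^{-1}\bone$ is the first column of $S_1^{-1}$, so its entries have norm $\le M_e$, while the $j$-th entry of the row vector $\ba^{T}S_1^{-1}$ equals $\sum_{l=j}^{N}a_l e^{-iH(s_l-s_j)}$ and has norm $\le M_e\sum_{l}|a_l|$. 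Two uniform-in-$N$ facts then close the estimate: first, $B_N^{-1}$ is bounded uniformly in $N$ --- by the discussion following Theorem~\ref{thm:NCNS_exist_mild} the operator $B_N$ is the interpolant of $B$, which admits a holomorphic extension to a neighbourhood of $[-1,1]$, so $B_N\to B$ and $B_N^{-1}$ stays bounded once the zeros of $b(z)$ avoid $\Sigma$; second, $\sum_{l=0}^{N}|a_l|\le\sum_{k=1}^{m}|\alpha_k|\sum_{l=0}^{N}|L_l(s_k)|$ is bounded uniformly in $N$, because each $s_k\in(-1,1]$ is a \emph{fixed} point at which the Lebesgue function of the CGL grid does not blow up as $N\to\infty$ (the logarithmic growth of the Lebesgue constant being confined to neighbourhoods of $\pm1$). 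Summing the at most $N+1$ terms of the row and invoking \eqref{eq:tau_max} gives $|\|S^{-1}\|| \le M_S(N+1)$. (Alternatively, the same bound drops out of the recursion $\varphi(s_p)=e^{-iH\tau_p}\varphi(s_{p-1})+G_p$ combined with $B_N\varphi(s_0)=G_0-\sum_l a_l\sum_{q\le l}e^{-iH(s_l-s_q)}G_q$.)

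\textbf{Bound on $|\|C\||$.} The first row of $C$ vanishes, and for $1\le p\le N$ one has $\|\beta_{p,l}\| \le M_e\int_{s_{p-1}}^{s_p}\|V(t)\|\,|L_l(t)|\,dt \le M_e M^V\int_{s_{p-1}}^{s_p}|L_l(t)|\,dt$, so $|\|C\|| = \max_{1\le p\le N}\sum_{l=0}^{N}\|\beta_{p,l}\|$. The structural observation is the nodal property $L_l(s_{p-1})=\delta_{l,p-1}$, $L_l(s_p)=\delta_{l,p}$: for every $l\notin\{p-1,p\}$ the polynomial $L_l$ vanishes at \emph{both} endpoints of $[s_{p-1},s_p]$, whence Lipschitz continuity gives $|L_l(t)|\le K^L\min(t-s_{p-1},\,s_p-t)$ and $\int_{s_{p-1}}^{s_p}|L_l(t)|\,dt\le\frac{1}{4}K^L\tau_p^{2}$; summing the at most $N-1$ such terms, bounding $\|V\|\le M^V$, and using $\tau_p<\pi/N$ from \eqref{eq:tau_max} yields the leading contribution, of order $(N+1)^{-1}M^V K^L$. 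For the two exceptional indices $l\in\{p-1,p\}$ the factor $L_l$ equals $1$ at one endpoint, so I would split $V(t)=V(s_p)+(V(t)-V(s_p))$: the increment is controlled by $\|V(t)-V(s_p)\|\le K\tau_p$ via \eqref{eq:V_Lip} and, together with $\int_{s_{p-1}}^{s_p}(|L_{p-1}(t)|+|L_p(t)|)\,dt\le\tau_p\Lambda_N$ with the Lebesgue constant $\Lambda_N=O(\ln(N+1))$ of the CGL nodes, it supplies the lower-order term of order $K\,(N+1)^{-2}\ln(N+1)$, whereas the part retaining $V(s_p)$ is again estimated by Lipschitz continuity of $L_l$ at its single interior endpoint and is absorbed into the leading term. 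Collecting these pieces yields \eqref{eq:C_est}.

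\textbf{Bound on $|\|S^{-1}C\||$ and the main obstacle.} As noted, \eqref{eq:SC_est} follows from \eqref{eq:S_est} and \eqref{eq:C_est} by submultiplicativity of \eqref{eq:mat_norm_X}. I expect the bulk of the work to lie in the $C$ estimate --- in particular the bookkeeping for the two nodal Lagrange factors $L_{p-1},L_p$, where their double vanishing is unavailable and one must trade it for the Lipschitz modulus of $V$ without spoiling the $O((N+1)^{-1})$ rate --- together with the two uniform-in-$N$ ingredients feeding \eqref{eq:S_est}: the boundedness of $B_N^{-1}$, which rests on the interpolation-convergence argument after Theorem~\ref{thm:NCNS_exist_mild}, and the boundedness of $\sum_l|L_l(s_k)|$ at the fixed nonlocal points $s_k$.
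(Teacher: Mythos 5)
Your estimates for $|\|C\||$ and the submultiplicativity step giving \eqref{eq:SC_est} are sound and essentially the paper's own argument (the paper treats all $l$ uniformly, using that $L_l$ vanishes at one endpoint of $[s_{p-1},s_p]$ together with the Lipschitz continuity of $V$ and of $L_l$, and picks up $\Lambda_{N+1}\sim\ln(N+1)$ from $\int_{s_{p-1}}^{s_p}\sum_l|L_l|$; your finer split into doubly vanishing indices and the two nodal indices $l\in\{p-1,p\}$ is a harmless variation). The genuine gap is in your proof of \eqref{eq:S_est}. You reduce the rank-one correction $S_1^{-1}\bone\ba^{T}S_1^{-1}B_N^{-1}$ to the scalar quantity $\sum_{l=0}^{N}|a_l|\le\sum_{k=1}^{m}|\alpha_k|\sum_{l=0}^{N}|L_l(s_k)|$ and claim this is bounded uniformly in $N$ because ``the logarithmic growth of the Lebesgue constant is confined to neighbourhoods of $\pm1$.'' That statement is false for the CGL grid: under $s=\cos\theta$ the nodes are equispaced in $\theta$, and the local maxima of the Lebesgue function between consecutive nodes are of order $\tfrac{2}{\pi}\ln N$ throughout the interior of $[-1,1]$ (endpoint-concentrated growth is a feature of nodes equispaced in $s$; at $\pm1$ the CGL Lebesgue function actually equals $1$, since these are nodes). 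For a fixed generic $s_k\in(-1,1)$ the point falls near the midpoint of a grid gap for infinitely many $N$, so $\sum_l|L_l(s_k)|$ is only $O(\ln N)$ and is $\gtrsim c\ln N$ along a subsequence; it is not $O(1)$.

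With the correct bound $\sum_l|a_l|=O(\ln N)$, your estimate of the rank-one part has row sums of order $(N+1)\ln N$, i.e.\ it yields $|\|S^{-1}\||\le M_S(N+1)\ln(N+1)$, overshooting \eqref{eq:S_est} by a logarithm (which would then also contaminate \eqref{eq:SC_est}); your parenthetical alternative via the recursion for $\varphi(s_p)$ needs the same control and suffers the same loss. The paper does not pass to $\sum_l|a_l|$: in its explicit row-sum expression obtained from \eqref{eq:S_inv_repr}, each of the $N+1$ summands is a bounded propagator multiplied by a \emph{partial sum of the operator series defining} $B_N$, and it is the norms of these operator partial sums, in combination with $\|B_N^{-1}\|$ once $B_N\to B$, that are argued to stay bounded independently of $N$. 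To close your argument you would need a uniform-in-$N$ bound on the operator tails $\sum_{l\ge j}a_l e^{-iH(s_l-s_j)}$ themselves (or an equivalent device playing the role of the paper's ``balancing'' against $B_N^{-1}$), not on the $\ell^1$ norm of the coefficients $a_l$.
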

\begin{proof}
Representation  \eqref{eq:S_inv_repr} from Lemma \ref{thm:S_inv} permits us to evaluate $|\|S^{-1}\||$ explicitly 
\begin{multline*}
|\|S^{-1}\|| \leq \left\| B_N^{-1}\right \| \max\limits_{1\leq k \leq N+1} 
\left\{
\vphantom{\sum\limits_{l=0}^{k-1}} \right. \\
\sum\limits_{l=0}^{k-1} 
\left \|
e^{-i(s_{k-1}-s_l)H }
\left(I+\sum\limits_{j=0}^{l-1}a_j e^{-i (s_j-s_0) H}\right) 
\right \| 
\\
\left. +
\left \|
\sum\limits_{l=k}^{N} 
e^{-i(s_{k-1}-s_0)H } 
\sum\limits_{j=l}^{N}a_j e^{-i (s_j-s_l) H}
\right \| 
\right\}
\end{multline*}
Each of $N+1$ terms inside the curly brackets of the above formula contains the product of a bounded propagator term and a part of the sum comprising $B_N$ (see Lemma \ref{thm:S_inv}). The bounded norm of this part is balanced out by a norm of the inverse $\left\| B_N^{-1}\right \|$. Thus, starting from some value of $N$, when $B_N^{-1}$ becomes bounded and close to $B^{-1}$, the ratio of the two norms must be bounded and no longer dependent on $N$.   
Inequality \eqref{eq:S_est} is proved. 

To derive bound \eqref{eq:C_est}, we estimate $\|\beta_{p,l}\|$:
\begin{equation*}
\begin{split}
& \|\beta_{p,l}\| = \left \|\int\limits_{s_{p-1}}^{s_p} e^{-i\left(s_p-t\right)H}V(t)L_{l}(t)dt \right \|\\
&\leq \max_{s \in [0,\tau_p]} \left\|e^{-is H}\right\| \int\limits_{s_{p-1}}^{s_p}\| V(t)L_{l}(t)\|d t.\\
\end{split}
\end{equation*}
Note that $L_{l}(t)$ is zero at least at one endpoint of the interval $t \in (s_{p-1}, s_p)$, $p=0,\ldots, N$.
We pick a smallest of such endpoints and label it as $\theta$, so  $L_{l}(\theta) = 0$.
Then we can add the term $-V(\theta)L_l(\theta)$ to the above integrand without changing the value of the norm inside the integral.
This procedure leads us to the following estimates
\[
\begin{split}
&\int\limits_{s_{p-1}}^{s_p}\| V(t)L_{l}(t)\|d t = 
\int\limits_{s_{p-1}}^{s_p}\| V(t)L_{l}(t) - V(\theta)L_l(\theta)\|d t\\
&\leq \int\limits_{s_{p-1}}^{s_p}|L_{l}(t)|\| V(t) - V(\theta)\| + \|V(\theta)\||L_l(t) - L_l(\theta)|d t\\
&\leq K \tau_{\text{max}} \int\limits_{s_{p-1}}^{s_p}|L_{l}(t)|dt + M^V_p K^L_p \frac{\tau_{\text{max}}^2}{2}. 
\end{split}
\]
To get the last inequality we relied on the Lipschitz continuity of  $V(t)$, expressed by \eqref{eq:V_Lip}, and the fact that the monomials $L_{l}(t)$ are also Lipschitz continuous by definition.
Here $M^V_p$ and $K^L_p$  are the upper bound on $V(t)$ and the Lipschitz constant of $L_l(t)$ on $t \in (s_{p-1}, s_p)$, accordingly. 
%
%
The previous inequality permits us to estimate the norm of $|\|C\||$:

\begin{equation*}
\begin{split}
|\|C\|| \leq &\max_{0 \le p \le N}\sum_{l=0}^N \|\beta_{p,l}\| \\
\leq & \tau_{\text{max}} \max_{s \in [0,\tau_{\text{max}}]} \left\|e^{-is H}\right\| 
\max_{0 \le p \le N}\left ( \frac{1}{2} M^V_p K^L_p 
\right .\\
& \left .
 + K \int\limits_{s_{p-1}}^{s_p}\sum_{l=0}^N|L_{l}(t)|dt  \right )\\
\leq & \tau_{\text{max}} \max_{s \in [0,\tau_{\text{max}}]} \left\|e^{-is H}\right\| 
\left ( \frac{1}{2} M^V K^L  + K \tau_{\text{max}} \Lambda_{N+1}\right ).\\
\end{split}
\end{equation*}
This newly obtained estimate together with \eqref{eq:tau_max} and \eqref{eq:S_est} imply \eqref{eq:C_est}, \eqref{eq:SC_est}.
\end{proof}

Let $\Pi_{N}$ be a set of all polynomials in $s$ of degree less then or equal to $N$ with the coefficients from $X$. Then, the
Lebesgue inequality
\begin{equation}\label{eq:best_approx}
\begin{split}
\max_{s \in	[-1,1]}\| \phi(s)-P_{N}(s; \phi)\| \le (1+\Lambda_{N+1})E_{N}(\phi)
\end{split}
\end{equation}
characterizes the error of the best
approximation of $\phi$ by the polynomials of degree not greater than
$N$,
\begin{equation}\label{eq:EN}
E_N( \phi)=\inf_{P \in \Pi_{N}}\max_{s \in [-1,1]}\|
\phi(s)-P(s)\|.
\end{equation}
Now, we are ready to formulate the main result.
\begin{thm}\label{thm:main}
	Suppose that the assumptions of Theorem \ref{thm:NCNS_exist_mild} are valid. 
	If, for a given $\Psi_0$ and some Lipschitz continuous and bounded $V(s)$, the solution $\psi$ to \eqref{eq:NCP_transf}, \eqref{eq:NCPnonloc_cond_transf} exist, then for a sufficiently large $N$ two following  propositions remain true. 
		\begin{enumerate}
		\item The equation \eqref{eq:matrix_sys} posses a unique solution, which can be found by a fixed point iteration  
		\begin{equation} \label{eq:iter_solution}
		\Phi^{(n+1)} = S^{-1}C \Phi^{(n)} + S^{-1}F, \quad \Phi^{(0)} = \mathbf{0},
		\end{equation}
		provided that the Lipschitz constant $K^\psi$ of $\psi$ satisfies the inequality $\frac{1}{2} M_{SC}M^V K^\psi < 1$, with the quantities  $M_{SC}, M^V$ defined by Lemma \ref{thm:matrix_est}.
		\item The accuracy of solution $\Phi$ to \eqref{eq:matrix_sys} is characterized by the bound: 
		\begin{equation}\label{eq:matrix_sys_bound}
		\|\Psi-\Phi\||\leq M\ln{(N+1)}E_{N}(\phi),
		\end{equation}
		where $\Psi$ is a projection of $\psi$ on $\omega_{N}$ and $M$ is some constant independent of $N$.
	\end{enumerate}
\end{thm}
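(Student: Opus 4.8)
The plan is to build both assertions on the fixed point form of \eqref{eq:matrix_sys}, namely $\Phi = S^{-1}C\Phi + S^{-1}F$. For $N$ large enough the operator function $B_N^{-1}$ is bounded (Lemma \ref{thm:S_inv} together with the convergence $B_N\to B$ noted afterwards, using the hypotheses of Theorem \ref{thm:NCNS_exist_mild}), so $S^{-1}$ exists and \eqref{eq:S_inv_repr} holds; moreover the solution $\psi$, assumed to exist, is Lipschitz on $[-1,1]$ — hence $K^\psi<\infty$ — by a standard argument from its integral representation analogous to \eqref{eq:NCPPropSolRepSE} together with the boundedness of $e^{-itH}$ and of $V$ and the regularity assumed on $H$ and the data.

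For the first assertion I would invoke the Banach fixed point theorem for the affine map $G(\Phi)=S^{-1}C\Phi+S^{-1}F$ on the complete metric space $(X^{N+1},|\|\cdot\||)$. Its Lipschitz constant is exactly $|\|S^{-1}C\||$, which by \eqref{eq:SC_est} does not exceed $M_{SC}\bigl(\tfrac12 M^V K^L+\pi K\tfrac{\ln(N+1)}{N+1}\bigr)$. The logarithmic term tends to $0$, so for large $N$ the contraction factor is controlled by $\tfrac12 M_{SC}M^V$ times the Lipschitz modulus actually seen by that estimate; re-running the bound on $\sum_l\beta_{p,l}\varphi(s_l)=\int_{s_{p-1}}^{s_p}e^{-i(s_p-t)H}V(t)P_N(t;\varphi)\,dt$ with $P_N(t;\varphi)$ compared to the Lipschitz function it samples — which for the iterates relevant here is $\psi$ — replaces the geometric constant $K^L$ by $K^\psi$. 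Hence, under $\tfrac12 M_{SC}M^V K^\psi<1$, one obtains $|\|S^{-1}C\||<1$, so $I-S^{-1}C$ is boundedly invertible, \eqref{eq:matrix_sys} has the unique solution $\Phi=(I-S^{-1}C)^{-1}S^{-1}F$, and the Picard/Neumann iteration \eqref{eq:iter_solution} converges to it geometrically.

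For the second assertion I would measure the defect of $\Psi=(\psi(s_0),\dots,\psi(s_N))$ in \eqref{eq:matrix_sys}. Substituting $\Psi$ row by row and using the integral representation of $\psi$ shows $S\Psi=C\Psi+F+\rho_N$, where $(\rho_N)_0=\sum_{k=1}^m\alpha_k\bigl(P_N(s_k;\psi)-\psi(s_k)\bigr)$ and, for $p\ge1$, $(\rho_N)_p=\int_{s_{p-1}}^{s_p}e^{-i(s_p-t)H}V(t)\bigl(\psi(t)-P_N(t;\psi)\bigr)\,dt$. By the Lebesgue inequality \eqref{eq:best_approx}, $\max_s\|\psi(s)-P_N(s;\psi)\|\le(1+\Lambda_{N+1})E_N(\psi)$, so $\|(\rho_N)_0\|\lesssim\ln(N+1)E_N(\psi)$, while each remaining component carries the extra factor $\tau_{\max}<\pi/N$ from \eqref{eq:tau_max}, i.e.\ the tail $r:=\rho_N-(\rho_N)_0\mathbf{e}_0$ obeys $|\|r\||\lesssim N^{-1}\ln(N+1)E_N(\psi)$. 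Subtracting $S\Phi=C\Phi+F$ gives $(I-S^{-1}C)(\Phi-\Psi)=-S^{-1}\rho_N$, hence $|\|\Phi-\Psi\||\le|\|(I-S^{-1}C)^{-1}\||\,|\|S^{-1}\rho_N\||\le(1-|\|S^{-1}C\||)^{-1}|\|S^{-1}\rho_N\||$.

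The step I expect to be the main obstacle is estimating $|\|S^{-1}\rho_N\||$ without losing the claimed order: the crude bound $|\|S^{-1}\||\,|\|\rho_N\||\le M_S(N+1)\cdot C\ln(N+1)E_N(\psi)$ carries a spurious factor $N$. To remove it I would treat $\rho_N=(\rho_N)_0\mathbf{e}_0+r$ separately — for $S^{-1}r$ the factor $N+1$ of \eqref{eq:S_est} is absorbed by the $N^{-1}$ already present in $r$; for the dominant part I would use the explicit first column of $S^{-1}$, which by \eqref{eq:S_inv_repr}–\eqref{eq:S1_inv} has $j$-th entry $e^{-i(s_j-s_0)H}B_N^{-1}$, of norm bounded uniformly in $N$. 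This yields $|\|S^{-1}\rho_N\||\lesssim\ln(N+1)E_N(\psi)$, and since $|\|S^{-1}C\||$ stays bounded away from $1$ for large $N$, \eqref{eq:matrix_sys_bound} follows with $\phi=\psi$. The loose ends to tidy are the uniform-in-$N$ boundedness of $B_N^{-1}$ and the identification of the effective Lipschitz constant in the first part; both are where I would concentrate the technical effort.
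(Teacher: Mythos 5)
Your proposal is correct and follows the same overall strategy as the paper: part~1 is exactly the paper's argument (for $N$ large enough $B_N^{-1}$ is bounded because $B_N\to B$ under the hypotheses of Theorem~\ref{thm:NCNS_exist_mild}, so Lemmas~\ref{thm:S_inv} and \ref{thm:matrix_est} apply; the affine map $\Phi\mapsto S^{-1}C\Phi+S^{-1}F$ is a contraction once the logarithmic term in \eqref{eq:SC_est} is small and the geometric constant $K^L$ is replaced by the Lipschitz constant of the function actually interpolated, which the paper phrases as ``$K_n$ goes towards $K^\psi$''; then Banach's fixed-point theorem). For part~2 the paper only asserts that \eqref{eq:matrix_sys_bound} ``follows immediately from \eqref{eq:SC_est}, \eqref{eq:best_approx}'', whereas you carry out the implicit consistency--stability argument: the defect $\rho_N$ of the projected exact solution, the Lebesgue inequality to bound it by $\ln(N+1)E_N(\psi)$, and the inversion $(I-S^{-1}C)(\Psi-\Phi)=S^{-1}\rho_N$. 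Your extra observation is genuinely valuable: a blind use of \eqref{eq:S_est} would cost a spurious factor $N+1$ through the nonlocal-condition row of $\rho_N$ (the only row without the $\tau_{\max}$ factor), and you remove it by computing the first column of $S^{-1}$ explicitly, whose entries $e^{-i(s_j-s_0)H}B_N^{-1}$ are bounded uniformly in $N$ -- a detail the paper's one-line proof glosses over. The loose ends you flag (uniform-in-$N$ boundedness of $B_N^{-1}$, and the identification of the effective Lipschitz constant seen by the iteration) are exactly the points the paper itself treats informally, so your proof is at least as complete as the published one.
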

\begin{proof}
	First of all  we observe that every solution to \eqref{eq:NCPSchrodEqt}, \eqref{eq:NCPnonloc_cond} is also a solution to \eqref{eq:SchrodEqt}, \eqref{eq:NCPnonloc_cond} with $iV(t)\Psi(t)$ in place of $v(t)$. 
	Consequently, there exist some  $\Psi_0$ that corresponds to such solution of \eqref{eq:SchrodEqt}, \eqref{eq:NCPnonloc_cond}. This entails the validity of the statement from Theorem \ref{thm:NCNS_exist_mild} regarding the zeros of $b(z)$ \eqref{eq:zerosExpI}, which, in turn, guaranties that $B^{-1}$ is bounded. 
	As we already mentioned $B_N^{-1} \rightarrow B^{-1}$ ($N \rightarrow \infty$). 
	Thus, we can take $N=N_0$ large enough so that both Lemmas \ref{thm:S_inv}, \ref{thm:matrix_est} are true simultaneously. 
	Then we find $N'>N_0$ from the inequality 
	\[
	M_{SC}\left ( \frac{1}{2} M^V K_n  + \pi K \frac{\ln{(N+1)}}{N+1}\right )<1.
	\]
	The constant $K_n$ here is zero initially, because the initial iteration is zero. 
	When the iteration scheme progresses this constant goes towards Lipschitz constant $K^\psi$ for the exact solution.  
	For any $N \geq N'$ mapping \eqref{eq:iter_solution} is a contraction, provided that the inequality from the theorem's premise regarding $K^\psi$ is valid.
	The Banach fixed-point theorem \cite{Reed2005} concludes the proof of the first part. 
	Estimate \eqref{eq:matrix_sys_bound} needed to prove the second part, follows immediately from \eqref{eq:SC_est}, \eqref{eq:best_approx}.
	
\end{proof}
We would like to remark that the existence result of Theorem \ref{thm:main} could be made independent of the Lipschitz constant $K^\psi$ of the exact solution $\psi$ by reformulating discretized system \eqref{eq:int_repr_seq_pol} as it was done in \cite{Vasylyk2012} for the abstract parabolic equation. 
This reformulation, however, vastly complicates the evaluation of $S^{-1}$ and makes the proposed numerical approach computationally infeasible.
Our preliminary numerical results indicate that the iterative method defined by \eqref{eq:iter_solution} converges, even for the oscillating potentials.
The method given by \eqref{eq:iter_solution} is not the only possible iterative method of approximating the solution to \eqref{eq:matrix_sys}. 
Since this equation is linear in $\Phi$ other Krylov-subspace-based iterative techniques \cite{Liesen2013} might be more effective than \eqref{eq:iter_solution}. 
This is especially true if $H$ is a large sparse matrix obtained as a result of finite-element (FE), boundary-element (BE) or finite-difference (FD) discretization  of the original partial differential operator.  

In principle the elements of $S$, $C$ from \eqref{eq:matrix_sys} can be approximated by any method capable of solving the Cauchy problem for \eqref{eq:SchrodEqt} numerically, see e. g. \cite{batty2012bounded}.  For a whole scheme to be effective however,  the chosen numerical method needs to be able to reuse the previously obtained solutions of stationary problems while  evaluating the sequence $e^{-i s_p H}$, $\beta_{k,p}$ with $p,k=0,\ldots,N$.   

\section{Numerical method for propagator approximation}\label{sec:op_func_approx}	
In this section we illustrate how to build parallel approximation methods for two types of operator functions needed to evaluate $S^{-1}, C$. Those are 
\begin{equation}\label{eq:prop_int_prop}
\psi_h(s)\equiv e^{-is H}\phi, \quad \psi_{ih}(s) \equiv \int\limits_{s'}^s  e^{-i(s-t) H} v(t) dt,
\end{equation}
where $\phi \in X$ and $s'<s$. 
	By applying the Dunford-Cauchy integral representation to \eqref{eq:prop_int_prop} and interchanging the integration order in the second integral we arrive at
		\begin{equation}\label{ContSolRepSE}
		\begin{split}
		\psi_h(s)=
		\frac{1}{2 \pi i}\intl_{\Gamma_I} \e^{-izs}R(z,H)\phi dz \\
		\psi_{ih}(s)=
		\frac{1}{2 \pi i}\intl_{\Gamma_I}  R(z,H) \int_{s'}^{s} \e^{-iz(s-t)}v(t) dt  dz, 
		\end{split}
		\end{equation}
	The function 
	$\psi_h(s)$ can be regarded as a solution of the homogeneous problem for \eqref{eq:SchrodEqt} with the initial condition $\Psi(0) = \phi$. 
	Similarly,  $\psi_{ih}(s)$ is a solution to the inhomogeneous problem for \eqref{eq:SchrodEqt} with the zero initial condition $\Psi(s') = 0$ and $V(s)=v(s)$.
	
	In order to proceed toward the numerical scheme for the approximation of \eqref{ContSolRepSE} we need to define a suitable integration contour $\Gamma_I$.	In doing so we should keep in mind that $\Gamma_I$ must be  positively oriented with respect to the region $\Sigma$ and the integrands need to have a sufficient decay rate for the integrals from \eqref{ContSolRepSE} to converge to \eqref{eq:prop_int_prop}. 
	We choose 
	\begin{equation}\label{eq:int_cont}
	\Gamma_I: z(\xi)= c_I+a_I\sqrt{\frac{\pi}{2}+\xi^2}-i d_I \tanh{\xi},\  \xi \in \R.
	\end{equation}
	The parameters of the contour $\Gamma_I$ are specified as follows 
	\[
	\begin{split}
	a_I &= \frac{d_s}{\pi/2-d},\quad  d_I = \frac{d_s \pi}{\pi -2d}, \\
	c_I &= b_s - a_I\sqrt{\frac{\pi}{2}-d^2} - d_I\tan{d},
	\end{split}
	\]
	where $b_s,d_s$ are defined in \eqref{eq:SpHalfStrip}.
	The parametrization $z(\xi)$ of the contour $\Gamma_I$ defines a conformal mapping of the strip 
	\[
	D_d=\{z \in \C: - \infty < \Re z < \infty, |\Im z|<d \},
	\]
	(see FIG. \ref{fig:env2stripe} b.) into the curvilinear stripe-like region enveloping the half-strip $\Sigma$ (see FIG. \ref{fig:env2stripe} a.).
	
	\onecolumngrid

	\begin{figure}[h!t]
	\centering
	\includegraphics[width=0.4\textwidth]{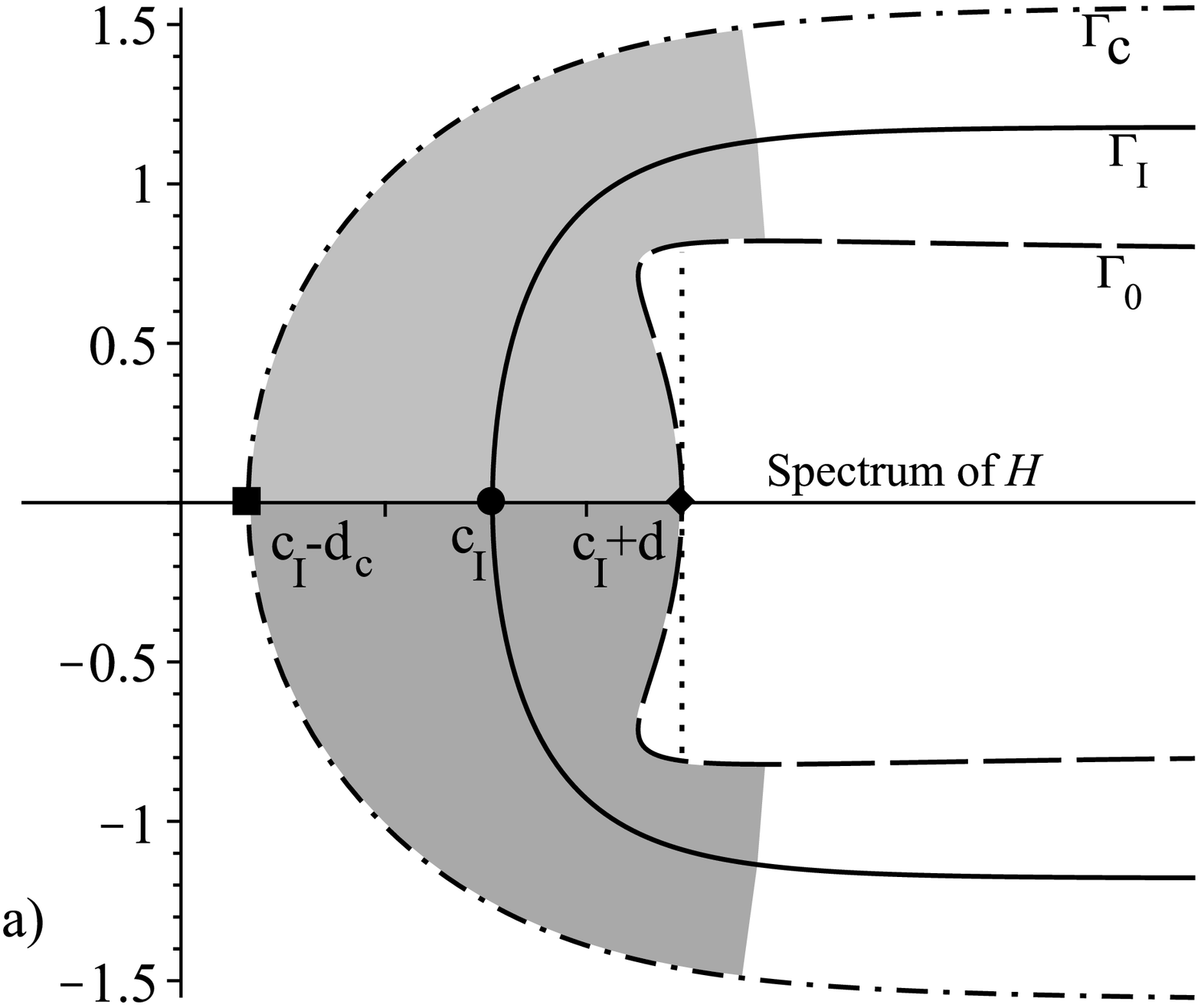}
	\hspace{1.5cm}
	\includegraphics[width=0.4\textwidth]{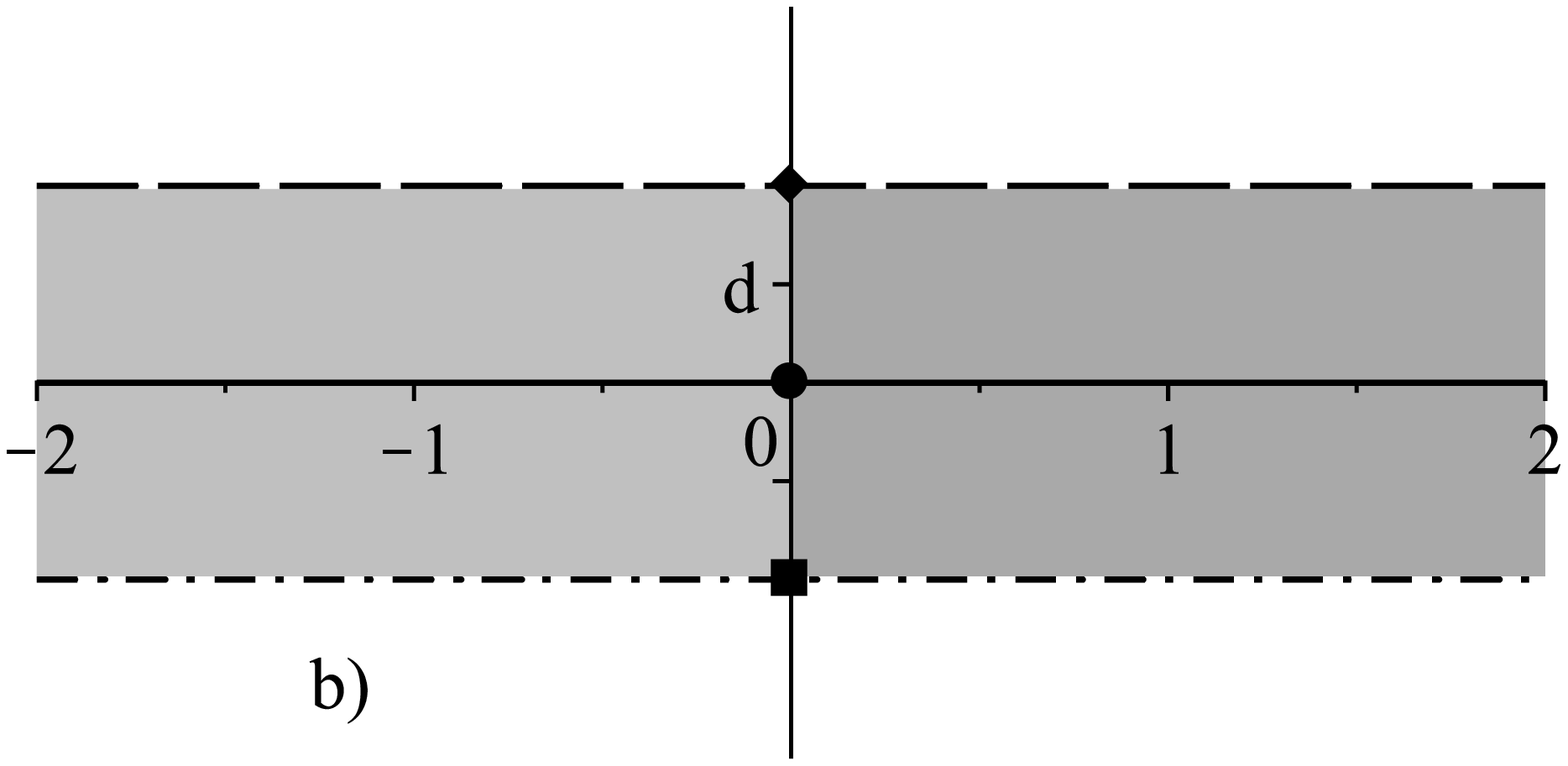} 
	\hfill
	\caption{Contour of integration $\Gamma_I$ ($b_s=\frac{\pi}{2}$, $d_s=\frac{\pi}{4}$, $d_c = d=\frac{\pi}{6}$) and the spectral envelope domain a); Its pre-image infinite horizontal strip b).}
	\label{fig:env2stripe}
	\end{figure}
	\twocolumngrid
	Integrands from \eqref{ContSolRepSE} remain analytic and bounded with respect to $\xi$ for all $\xi \in D_d$. 
	The parameter $0 \leq d \leq \frac{\pi}{6}$ is selected in such a way that all the zeros of $b(z)$ lay outside the mentioned stripe-like region $z(D_d)$.
	
	After parametrization of \eqref{ContSolRepSE} on $\Gamma_I$ we obtain 
	\begin{equation}\label{ParHomSolSE}
	\begin{split}
	\psi_h(s)=& \frac{1}{2 \pi i}\intl_{-\infty}^{\infty}\cF(s,\xi) \phi d \xi, \\
	\psi_{ih}(s)=& \frac{1}{2 \pi i}\intl_{-\infty}^{\infty}\cF(s,\xi) \intl_{s'}^s \e^{i z(\xi)t}v(t) dt d\xi,
		\end{split}
	\end{equation}
	with
	\[
	\begin{split}
	\cF(s,\xi) = &\e^{-iz(\xi)s}F_H(\xi),\\
	\cF_H(\xi) = &z'(\xi) \left[(z(\xi)I-H)^{-1}-\suml_{r=1}^{\lfloor\delta \rfloor}
	\frac{(H-z_0 I)^{r-1}}{(z(\xi)-z_0)^r} \right],\\
	z'(\xi)=&\frac{a_I \xi}{\sqrt{\pi/2+\xi^2}}+i d_I (\tanh{\xi}^2-1).
	\end{split}
	\]
	Here $\lfloor\delta \rfloor$ denotes a floor of $\delta$, i.e. the largest integer number less or equal to $\delta$. 
	In the formulas above we introduced a correction $\suml_{r=1}^{\lfloor\delta \rfloor}\frac{(H-z_0 I)^{r-1}}{(z-z_0)^r(\xi)}$ to the resolvent $R(z,H)$.  
	As discussed in \cite{GMV-mon}, the correction does not change the value of the integral. 
	It is needed to cancel out the first $\lfloor\delta \rfloor$ terms in the Taylor expansion of $R(z,H)$ around  
	\[
	z_0 = \min\left \{0,b_s - a_I\sqrt{\frac{\pi}{2}-d^2} - 1\right \}.
	\]
	If $\phi \in D(H^\delta)$ the corrected resolvent (the part of $\cF_H(s,\xi)$ inside square brackets) will  decay at least as  $|z|^{-\lfloor\delta\rfloor}$, when $z \in \Gamma_I$ and $|z|$ is large enough \cite{GMV-mon}. 
	To ascertain this property, we estimate the norm of the corrected resolvent on $\Gamma_I$:
	\[
	\begin{aligned}
	&\left \|(zI-H)^{-1}-\suml_{r=1}^{\lfloor\delta \rfloor}\frac{(H-z_0 I)^{r-1}}{(z-z_0)^r}	\right \| \\
	& = \left \|\suml_{r=\lfloor\delta \rfloor+1}^{\infty}\frac{(H-z_0 I)^{r-1}}{(z-z_0)^r} \right \|\\
	& =\left \| \left (\frac{H-z_0 I}{z-z_0}\right )^{\lfloor\delta \rfloor}\kern-1em (zI-H)^{-1}\right \|\\
	&\leq |z-z_0|^{-\lfloor\delta \rfloor}\frac{M}{|\Im{z}|-d_s} \left \|(H-z_0 I)^{\lfloor\delta \rfloor}\right \| 
	\end{aligned}
	\] 
	We applied \eqref{eq:ResHalfStrip} to get the above formula. Its last term $(H-z_0 I)^{\lfloor\delta \rfloor}\phi$ is bounded when $\phi \in D(H^\delta)$.
	
	The next auxiliary result describes the accuracy of the trapezoid quadrature rule for the improper integrals similar to \eqref{ParHomSolSE}.
	\begin{thm}[\cite{schrod_num_Sytnyk2017}]\label{thm:SQ_alg_decay_strip}
		Assume that the function $f(z):\C \rightarrow X$ is analytic in the horizontal strip $D_d$, $d>0$. 
		If, for all $z \in D_d$,
		\begin{equation}\label{eq:Alg_dec_strip}
			\|f(z)\| \leq  \frac{L}{1 +  |z|^\delta}, 
		\end{equation}
		with some $\delta > 1$, $L > 0$,
		then the error of trapezoid quadrature rule 
		satisfies the following estimate 
		\begin{equation}\label{eq:err_est_no_N1}
		\begin{split}
		&\left \|\int\limits_{-\infty}^{\infty}f(x)dx - h\sum\limits_{k=-n}^{n}f(kh) \right\| \leq c\frac{(n+1)^{1-\delta}}{(\delta-1)}
		h^{1-\delta},
		\end{split}
		\end{equation}
		provided that 
		\begin{equation*}\label{eq:h_no_N1}
		h=
		\frac{2\pi d }{\delta-1}\left(
		\bW \left(
		\frac{2\pi d }{\delta-1}
		\left(
		\frac{\beta (\delta - 1)}
		{\pi d}
		\right)^{\frac{1}{\delta-1}} 
		(n+1)
		\right)
		\right)^{-1},
		\end{equation*}
		with $\beta = \min\left\{
		\tfrac{2\pi \delta^{-1}}{\sin{\left ( \pi\delta^{-1} \right )}},
		\left(\frac{2}{d}\right)^{\delta - 1} B\left(\frac{\delta}{2}-\frac{1}{2}, \frac{\delta}{2}+\frac{1}{2} \right)
		\right\}.
		$
		Here $B(\cdot,\cdot)$ is the beta function, $c$ is the constant dependent on $\delta, d, L$ and independent on $n$,  $\bW(\cdot)$ denotes a positive branch of the Lambert-W function \cite{Corless1996}, i.e. for any given  $x>0$,  $\bW(x)$ is a unique positive solution of $\bW e^{\bW} = x$.
	\end{thm}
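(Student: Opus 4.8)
The plan is to follow the classical sinc-quadrature analysis: decompose the quadrature error as $\eta_D+\eta_T$, where $\eta_D=\bigl\|\int_{-\infty}^{\infty}f\,dx-h\sum_{k\in\Z}f(kh)\bigr\|$ is the \emph{discretization} error (the infinite trapezoidal sum against the integral) and $\eta_T=\bigl\|h\sum_{|k|>n}f(kh)\bigr\|$ is the \emph{truncation} error (the finite sum against the infinite one); estimate each term separately; and finally pick $h=h(n)$ so that $\eta_D$, which is super-algebraically small in $1/h$, is no larger than $\eta_T$.

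For $\eta_D$ I would exploit analyticity of $f$ in $D_d$. Poisson summation (equivalently a residue computation against the $h$-periodic kernel $(\e^{-2\pi i z/h}-1)^{-1}$) gives, for any $0<d'<d$,
\begin{equation*}
h\suml_{k\in\Z}f(kh)-\intl_{-\infty}^{\infty}f(x)\,dx=\suml_{m\neq 0}\widehat{f}\!\left(\tfrac{m}{h}\right),\qquad \bigl\|\widehat{f}(\omega)\bigr\|\le \e^{-2\pi d'|\omega|}N(f,d'),
\end{equation*}
the second bound coming from shifting the Fourier-transform contour to $\Im z=\mp d'$, with $N(f,d')=\int_{-\infty}^{\infty}(\|f(x+id')\|+\|f(x-id')\|)\,dx$. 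Summing the geometric series yields $\eta_D\le 2N(f,d')\,\e^{-2\pi d'/h}/(1-\e^{-2\pi d'/h})$. To bound $N(f,d')$ I would invoke \eqref{eq:Alg_dec_strip} in two ways: using $|x\pm id'|\ge|x|$ gives $N(f,d')\le 2L\int_{-\infty}^{\infty}(1+|x|^\delta)^{-1}dx=2L\,\tfrac{2\pi/\delta}{\sin(\pi/\delta)}$ (a standard beta integral, finite since $\delta>1$), whereas bounding $1+|x\pm id'|^\delta\ge(x^2+d'^2)^{\delta/2}$ and integrating gives a competing estimate of beta-function type carrying the factor $(2/d)^{\delta-1}$; taking the smaller of the two and letting $d'\uparrow d$ (the decay is assumed on all of $D_d$) produces $\eta_D\le cL\beta\,\e^{-2\pi d/h}/(1-\e^{-2\pi d/h})$ with $\beta$ exactly as in the statement, and below a threshold on $h$ the denominator is absorbed into $c$.

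For $\eta_T$ I would just compare the tail sum with an integral: \eqref{eq:Alg_dec_strip} gives $\eta_T\le 2h\sum_{k>n}L\,(1+(kh)^\delta)^{-1}\le 2L\,h^{1-\delta}\sum_{k>n}k^{-\delta}\le c\,L(\delta-1)^{-1}(n+1)^{1-\delta}h^{1-\delta}$, which already matches the right-hand side of \eqref{eq:err_est_no_N1}. It then remains to fix $h$ by balancing the two estimates. Setting $L\beta\,\e^{-2\pi d/h}\asymp L(\delta-1)^{-1}(n+1)^{1-\delta}h^{1-\delta}$ and substituting $\tfrac{2\pi d}{h}=(\delta-1)w$, one gets after taking $(\delta-1)$-th roots a transcendental relation of the shape $w\,\e^{w}=\tfrac{2\pi d}{\delta-1}\bigl(\tfrac{\beta(\delta-1)}{\pi d}\bigr)^{1/(\delta-1)}(n+1)$, whose unique positive root is $w=\bW(\,\cdots(n+1)\,)$ --- precisely the formula for $h$ in the statement. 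With this $h$ both $\eta_D$ and $\eta_T$ are of the same order, so the total error is at most a constant multiple of $\tfrac{(n+1)^{1-\delta}}{\delta-1}h^{1-\delta}$, which is \eqref{eq:err_est_no_N1}. The overall scheme is standard; the actual work is quantitative. The delicate step is producing the discretization-error bound in the clean product form $L\beta\cdot\e^{-2\pi d/h}/(1-\e^{-2\pi d/h})$ with the sharp $\beta$ (the minimum of two genuinely different estimates of one integral), and then inverting the balancing relation exactly into Lambert-W form while keeping track of every constant, so that the resulting $c$ manifestly depends only on $\delta,d,L$ and not on $n$. A secondary technical point is justifying the limit $d'\uparrow d$ and the use of Poisson summation under merely algebraic decay --- this is exactly where $\delta>1$ is needed, guaranteeing $f\in L^1(\R,X)$ together with absolute convergence of $\sum_{k\in\Z}f(kh)$ and of $\sum_{m}\widehat f(m/h)$.
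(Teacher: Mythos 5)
This theorem is imported into the paper by citation from \cite{schrod_num_Sytnyk2017} with no proof given in the text, so there is no in-paper argument to compare against; your sketch is the standard sinc-quadrature analysis (split into the discretization error, bounded via analyticity in $D_d$ and the two competing bounds on $\int\|f(x\pm id)\|dx$ that produce exactly the stated $\beta$, plus the truncation error from the algebraic decay, then balance the two through a Lambert-W inversion), which is precisely the route of the cited source, and it is sound. The one soft spot is constant bookkeeping: naively balancing $\beta e^{-2\pi d/h}$ against $(\delta-1)^{-1}(n+1)^{1-\delta}h^{1-\delta}$ yields $w e^{w}=\frac{2\pi d}{\delta-1}\bigl(\beta(\delta-1)\bigr)^{\frac{1}{\delta-1}}(n+1)$ rather than the stated factor $\bigl(\tfrac{\beta(\delta-1)}{\pi d}\bigr)^{\frac{1}{\delta-1}}$, so the claim that the relation inverts ``precisely'' into the quoted $h$ presumes a sharper prefactor in the discretization bound; however, since the theorem leaves $c=c(\delta,d,L)$ unspecified, plugging the stated $h$ into your two estimates still gives \eqref{eq:err_est_no_N1}, and the discrepancy only shifts the constant.
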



	We assume that $\phi \in D(H^\delta)$ with some $\delta>1$ and approximate $\psi_h$ from \eqref{ParHomSolSE} by the following formula  
	\begin{equation}\label{eq:ApproxHomSolSE}
	\psi_h \approx \psi_{h,n}(s)=\frac{h}{2 \pi i}\suml_{j=-n}^{n}\cF(s,jh)\phi,
	\end{equation}
	where $h$ is specified by Theorem \ref{thm:SQ_alg_decay_strip}.
	Similarly, for the term $\psi_{ih}$ we use the same trapezoid quadrature rule for the outer integral:
	\begin{equation}\label{eq:ApproxInhomSolSE}
	 \begin{split}
	\psi_{ih}(s)\approx \psi_{ih,n}(s)=\frac{h_1}{2 \pi
		i}\sum_{j=-n}^{n}\cF(s,jh) \mu_{j}(s) ds.
	 \end{split}
	\end{equation}
	The inner integral $\mu_{j}(s) = \intl_{s'}^s \e^{iz(\xi)t}\phi dt$ does not depend on $H$, and hence can be approximated  directly.
	The numerical methods represented by \eqref{eq:ApproxHomSolSE}, \eqref{eq:ApproxInhomSolSE} reduce the approximation of \eqref{eq:prop_int_prop} to the sequence of resolvent evaluations $R(z(jh),H)$. By definition each resolvent evaluation is equivalent to the solution of the stationary problem 
	\begin{equation}\label{eq:res_eq}
		(z(jh)I-H)\Phi = g,
	\end{equation}	
	where $g=\phi$ in case of \eqref{eq:ApproxHomSolSE}, and $g=\mu_{j}(s)$ in case of \eqref{eq:ApproxInhomSolSE}.
	All those problems are mutually independent, hence can be solved in parallel. 
	
	According to Theorem \ref{thm:SQ_alg_decay_strip} the error of approximation of \eqref{ParHomSolSE} by \eqref{eq:ApproxHomSolSE}, \eqref{eq:ApproxInhomSolSE} is characterized by estimate \eqref{eq:err_est_no_N1} having the convergence rate on the order of $\mathcal{O}((n+1)^{1-\lfloor\delta\rfloor})$ (in the big-O notation). 
	In that regard, the proposed method is on par with other available numerical methods for propagator approximation \cite{Leforestier1991}. 
	The distinctive feature of the current method is that neither contour $\Gamma_I$ nor parameters $h$, $\delta$ are in any way dependant on $s$. 
	After numerical evaluation was performed once for some $s$, the propagator approximation formula \eqref{eq:ApproxHomSolSE} permits us to evaluate $\psi_h(s)$ for any other value of $s$ without re-evaluation of $R(z(jh),H)$. 
	It is possible because in such scenario the sequence of stationary problems \eqref{eq:res_eq} needs to be solved only once.
	
	As we already mentioned, the convergence order of the proposed approximation  is specified by the decay properties of $\|\cF(s,\xi)\phi\|$ as $z \in \Gamma_I$, $z \rightarrow \infty$. 
	The speed of decay, in turn, depends on the boundedness of the factors $H^r\phi, r=0,1, \ldots$. 
	So, if the  element $\phi \in X$ belongs to the domain of $H^\delta$ for some integer $\delta>1$, i.e. all the powers $H^r\phi$, $r\leq\delta$ are bounded, then the approximation will converge with the algebraic order $\delta-1$. For example, when $H$ is a second order partial differential operator, the property $\phi \in D(H^\delta)$ means that the function $\phi$ along with its first $2\delta$ derivatives are bounded in the region $\Omega$ (see the definition of $X$ above)\cite{batty2012bounded},\cite{GMV-mon}. 

	In practice, the upper bound on the value of $\delta$ also depends on the numerical method chosen to solve \eqref{eq:ApproxHomSolSE}, as one needs to be able to accurately evaluate the corrections to the resolvent on the same grid where resolvent equation \eqref{eq:ApproxHomSolSE} is solved.  
	For FE and BE discretization methods, $\delta$ would depend on the order of the FE- or BE- primitive element's shape functions. 
	Similarly for FD approximations, the optimal value of $\delta$ is related to the order and the type of the scheme used for the space discretization of \eqref{eq:ApproxHomSolSE}. 
	The optimal choice of $\delta$ in each specific case deserves a separate study and is therefore omitted here. 
	For this reason, we also omit the discussion on on how to balance the error estimates of methods from sections \ref{sec:discretization} and \ref{sec:op_func_approx}. In the next section we focus on the algorithmic aspects of the compound numerical method.

	\section{Implementation}\label{sec:impl_ex}
	In this section we present an algorithm to solve discretized version \eqref{eq:matrix_sys} of the translated nonlocal problem expressed by \eqref{eq:NCP_transf}, \eqref{eq:NCPnonloc_cond_transf}. 
	The following algorithm is based on the iterative method proposed in Theorem \ref{thm:main}. 
	It uses the methods of Section \ref{sec:op_func_approx} to evaluate the elements of $S^{-1}$ and $S^{-1}C$ from matrix equation \eqref{eq:matrix_sys}. 
	
	To begin with, it is worthwhile to point out that the second term from iterative formula \eqref{eq:iter_solution} can be simplified in the following way
	\begin{align*}
	S^{-1}F &=  S_1^{-1}\left (I - \bone \ba^T S_1^{-1}B_N^{-1} \right ) F  \\
	&=S_1^{-1}F -  S_1^{-1}\bone \ba^T S_1^{-1}B_N^{-1}F \\
	&=  S_1^{-1}F - S_1^{-1}\bone \left( \Psi_0 - B_N^{-1} \Psi_0 \right) \\
	&= \left ( B_N^{-1} \Psi_0,  \ldots, e^{-iH (s_N-s_0)}  B_N^{-1} \Psi_0\right )^T 	
	\end{align*}
	The calculation of $S^{-1}\Upsilon $ for a general vector $\Upsilon = \left ( \Upsilon_0, \ldots, \Upsilon_N \right )^T$ yields $S^{-1}\Upsilon =\left(S^\Upsilon_0, \ldots,  S^\Upsilon_N\right)^T$,
	\begin{equation*}
	\begin{aligned}
		S^\Upsilon_k =&  \suml_{l=0}^{k}e^{-iH(s_k-s_l) } 
		\left(I + \suml_{p=0}^{l-1} a_p e^{-iH(s_{p}-s_0)}\right)\Upsilon^B_l \\
		&- \suml_{l=k+1}^{N}e^{-iH(s_k-s_0)} \suml_{p=l}^{N} a_p e^{-iH(s_{p}-s_l)}\Upsilon^B_l, \\
	\end{aligned}
	\end{equation*}
	where $\Upsilon^B_l = B_N^{-1} \Upsilon_l$. 
	
	Each iteration \eqref{eq:iter_solution} of the numerical method to solve \eqref{eq:matrix_sys} involves the evaluation of product $S^{-1}C \Phi^{(n)}$. 
	The elements of matrix $C$ can be pre-calculated only when the potential $V(s)$ does not depend on the space variable. 
	For such $V(s)$, of course, the propagator of \eqref{eq:NCP_transf} would necessary commute with itself at different times and all the analysis performed in the paper could be greatly simplified. 
	In a general situation one can not pre-calculate $C$ alone because its elements $b_{p,l}$ contain operator functions of $H$ acting on the product $V(s)\Phi^{(n)}$. 
	Let $\Upsilon = C \Phi^{(n)}$ and $\Phi^B_j = B_N^{-1} \Phi_j$, then the $k$-th element of $C^{\Upsilon} \equiv S^{-1}C \Phi^{(n)}$ can be represented as follows
	\[
\begin{aligned}
		C^\Upsilon_k = & \suml_{j=0}^N \left [
		\suml_{l=0}^{k}
		\left(I + \suml_{p=0}^{l-1} a_p e^{-iH(s_{p}-s_0)}\right)e^{iH s_l} \beta_{lj}
		\right . \\
		& \left . - \suml_{l=k+1}^{N} \suml_{p=l}^{N} a_p e^{-iH(s_{p}-s_0)} e^{iHs_l} \beta_{lj}
		 \right ] e^{-iHs_k} \Phi^B_j.    \\
\end{aligned}	
	\]
	After simplification of the above formula we get 
	\begin{equation*}
		C^\Upsilon_k = \suml_{j=0}^N \suml_{l=1}^{N} f(s_k,H,l) \int\limits_{s_{l-1}}^{s_l} e^{iH t}V(t)L_{j}(t)\Phi_jdt,
	\end{equation*}
	where 
	\begin{equation*}\label{eq:SF_el_calc_func}
	f(s,z,l) = \frac{e^{-izs }}{b_N(z)}
	\begin{cases}
	 1 + \suml_{p=0}^{l-1} a_p e^{-iz(s_{p}-s_0)}, &s \geq s_l,\\
	- \suml_{p=l}^{N} a_p e^{-iz(s_{p}-s_0)}, &s < s_l. 
	\end{cases} 
	\end{equation*}
	The function  $b_N(z) = 1 + \sum\limits_{l=0}^{N} a_l e^{-iz (s_l -s_0)}$ is a scalar analogue of the operator $B_N$. 	
	Elements of $S^{-1}F$ can be numerically evaluated using formula \eqref{eq:ApproxHomSolSE} with 
	\[
	\cF(s,\xi) F = f(s-s_0,z(\xi),0) F_H(\xi) \Psi.
	\] 
	For the elements 	$C^\Upsilon_k$ of $S^{-1}C \Phi^{(n)}$ we get
	\begin{align}
		C^\Upsilon_k &= \suml_{j=0}^N g(s_k,H,j)\Phi_j,\label{eq:Sinv_Ck}\\
		g(s,z,j) &= \suml_{l=1}^{N} f(s,z,l) \int\limits_{s_{l-1}}^{s_l} e^{iz t}V(t)L_{j}(t)dt .\label{eq:Sinv_Ck_g}
	\end{align}
	Similarly to $S^{-1}F$, the action of function $g(s,H,j)$ on the element $\Phi_j$ is approximated using formula \eqref{eq:ApproxInhomSolSE} with 
	\[
	\cF(s,\xi) \Phi_j =  F_H(\xi) g(s,z(\xi),j) \Phi_j.
	\]
	
	For the convergence of approximation formulas \eqref{eq:ApproxHomSolSE}, \eqref{eq:ApproxInhomSolSE} it is critical to maintain a separation between the zeros of $b_N(z)$ and the region  $\Sigma$ defined by \eqref{eq:SpHalfStrip}. More precisely, for a chosen $N$ it must be ensured that the zeros of $b_N(z)$ lay outside the strip-like region depicted on FIG. \ref{fig:env2stripe}.
	Theoretically this separation for $N$ large enough is guaranteed by Theorem \ref{thm:main}.
	In order to achieve it practically one needs to choose $N$ so that all the zeros of $b_N(z)$ lay outside the region bounded by $\Gamma_0$ (see FIG. \ref{fig:env2stripe}) 
	\[
		\Gamma_0:  z(\xi)= c_I+a_I\sqrt{\frac{\pi}{2}+\left (\xi+i\frac{\pi}{6}\right )^2}-i d_I \tanh{\left (\xi + i\frac{\pi}{6}\right )},
	\]
	with $c_I, a_I, d_I$ being calculated for the given pair of spectral parameters $b_s,d_s$ and the strip parameter $d = \pi/6$.
	Then, find a critical value $d_{\mathrm{c}}$ by solving the equation 
	\[
		z(\xi - id_{\mathrm{c}}) = z_{\mathrm{c}},
	\]
	where $z_{\mathrm{c}}$ is the zero of $b_N(z)$ closest to the curve $\Gamma_0$.
	After that, perform the following substitution in formula \eqref{eq:int_cont} 
	\[
	\xi = \left (\frac{3}{\pi}d_{\mathrm{c}} +\frac{1}{2} \right )\nu + i \left( \frac{\pi}{12} - d_{\mathrm{c}}\right).
	\]
	This variable transformation 
	makes the admissible part of the strip-like region $z(\xi)$ depicted on FIG \ref{fig:env2stripe} a), where $\Im(\xi) \in (-d_{\mathrm{c}}, \pi/6)$, symmetric with respect to the imaginary part of the new variable $\nu $: $\Im(\nu) \in (-\pi/6, \pi/6)$. 
	As a result, the curve $z(\nu-i\pi/6)$ goes trough $z_{\mathrm{c}}$ and the curve $z(\nu+i\pi/6)$ coincides with $\Gamma_0$. 
%

	Every summand in the representation of $C^\Upsilon_k$ from \eqref{eq:Sinv_Ck} acts upon a different element $\Phi_j \in X$. 
	Consequently the evaluation of different $C^\Upsilon_k$ requires a re-evaluation of $R(z(m h),H)\Phi_j$ for the same sequence of $j=\overline{0,N}$. 
	To optimize the computations, in Algorithm \ref{alg:sol_it_sys} we evaluate the sum in \eqref{eq:Sinv_Ck} term-by-term for all $C^\Upsilon_k$, $k=\overline{0,N}$ at once. 
	This result in a more computationally efficient process because all operator functions dependent on the given set of evaluated values  $R(z(m h),H)\Phi_j$, $m=\overline{-n,n}$  are calculated in a row (streamlined).  
	
	Before starting to discuss algorithmic implementation, we would like to highlight two computationally useful properties of 
	\eqref{eq:Sinv_Ck},\eqref{eq:Sinv_Ck_g}. 
	The integrands in \eqref{eq:Sinv_Ck_g} do not contain the terms dependent on $s_k, \Phi_j$ from \eqref{eq:Sinv_Ck}. 
	Therefore, once the integrals are computed, they can be reused multiple times, while calculating $C^\Upsilon_k$. Moreover, for a large class of rational potentials $V(s)$ the mentioned integral from \eqref{eq:Sinv_Ck_g} can be evaluated analytically\footnote{The integral admits analytical representation when $V(s)$ is a ratio of two polynomials with coefficients dependent on the space variable. This includes polynomial and more generally Pade approximants.}. 

	The following algorithm calculates the action of the operator function, given by $f$, on a vector $\phi$. It will be used as a subroutine in Algorithm \ref{alg:sol_it_sys}.
	\begin{algorithm}[H]
	\caption{Calculate operator function of $H$ using \eqref{eq:ApproxHomSolSE}}
	\label{alg:o_f}
	\begin{algorithmic}[1]
	\STATE{\textbf{function} O\_F($f(t,z,p),\phi,n,\delta$)}
	\STATE{Calculate $h$ with help of Theorem \ref{thm:SQ_alg_decay_strip}}
		\FOR{$m=-n$ \TO $n$}\label{code:res_eqs}
\STATE{Solve $(z(mh)I-H)\Phi_m = \phi$}
\STATE{Apply the correction $\Phi_m := \Phi_m-\suml_{r=1}^{\lfloor\delta \rfloor}\frac{H^{r-1}\phi}{(z(m h)-z_0)^r}$}
\ENDFOR
\FOR{$l=0$ \TO $N$}
\IF{$l<p$} 
\STATE{Set $S_l:=0$}
\ELSE
\STATE{Evaluate $S_l:=\frac{h}{2 \pi i}\suml_{m=-n}^{n}z'(mh){f\left (s_{l},z(mh),p\right )}\Phi_m$}
\ENDIF
\ENDFOR
\RETURN{$S := \left(S_0, \dots , S_N \right)$ }
\end{algorithmic}
\end{algorithm}
	The non-trivial applications of \eqref{eq:NCP_transf} usually involve the solution of resolvent equation \eqref{eq:res_eq} trough a reduction of this equation (typically differential) to the linear system of algebraic equation via the chosen discretization procedure. 
	The number of unknowns in the resulting linear system is, as a rule, much larger than a size $N$ of the in-time grid $\omega_{N}$. 
	In such typical scenario, the computational complexity of Algorithm \ref{alg:o_f} is dominated by the complexity of the first loop, where $2n+1$ resolvent evaluations are performed. We will say that the sequential complexity of such evaluation is $(2n+1)$, assuming that a resolvent equation fully fits into the memory of one processing unit. As we already mentioned above, all resolvent evaluations can be performed in parallel. By that means, the parallel computational complexity can be reduced to $1$ (using $(2n+1)$ processing units), ignoring the communication overhead. 
	Such impressive complexity reduction is possible because there is no data dependency between the different steps of the mentioned loop from Algorithm \ref{alg:o_f}. 

	Now we turn to the main algorithm implementing iterative process \eqref{eq:iter_solution}.
	In the following algorithm we assume that $V(s)$ and the parameters  $\Psi_0$, $\alpha_k, s_k$, $k=1,2,\ldots m$  of nonlocal condition \eqref{eq:NCPnonloc_cond_transf} are given.
	\begin{algorithm}[H]
	\caption{Iterative solution of \eqref{eq:matrix_sys} via \eqref{eq:iter_solution}}
	\label{alg:sol_it_sys}
	\hspace*{\algorithmicindent} \textbf{Input:} $N$, $n$, $err\_tol$, $max\_it$ \\
	\hspace*{\algorithmicindent} \textbf{Output:} Approximate solution of \eqref{eq:matrix_sys}\par
	\begin{algorithmic}[1]
		\STATE{Set $\Phi^{(0)} := \mathbf{0}$; $it :=0$ }
		\STATE{Calculate $C$}		
		\STATE{Set $S^F := \mathrm{O\_F}(f(t,z,0),\Psi_0,n,\delta)$ }
		\REPEAT
			\STATE{Set $\Upsilon := C \Phi^{(it)}$; $S^\Upsilon := \mathbf{0}$}
			\FOR{$k=0$ \TO $N$}
					\STATE{$S^\Upsilon := S^\Upsilon + \mathrm{O\_F}( f(t,z,k),\Upsilon_k,n,\delta)$ }
			\ENDFOR
			\STATE{Set $\Phi^{(it+1)} := S^\Upsilon + S^F$}
			\STATE{Update error $err_{it+1} := |\|\Phi^{(it+1)} - \Phi^{(it)}\||$}
			\STATE{Set $it := it+1$} 
		\UNTIL{$err_{it} \leq err\_tol$ \OR $it>max\_it$}
		\RETURN $\Phi^{(it)}$
	\end{algorithmic}
	\end{algorithm}
	The sequential computational complexity of each iteration of Algorithm \ref{alg:sol_it_sys} is equal to $(2n+1)(N+1)$ resolvent evaluations plus $(2n+1)$ needed to start the iteration process. 
	All resolvent evaluations are again independent. Owing to that the parallel computational complexity of every iteration can be brought down to $1$, under condition that computational system contains at least $(2n+1)(N+1)$ processing units. To conclude this part we recall that in our complexity metric $1$ is a time needed to solve a stationary problem for a given $H$.

	\section*{Conclusions}\label{sec:conclussions}
			
In this work we developed a new collocation-based numerical method for non-stationary Schr\"odinger equation with non-commuting time-dependent Hamiltonian and linear nonlocal condition. 
Under rather general assumptions we established the existence conditions for the solution of semi-discretized version of the given nonlocal problem. 
Furthermore, we proposed and justified the iterative method to approximate the sought solution.
In addition we've shown how each step of the proposed iterative method can be numerically evaluated using the parallel algorithm with the convergence adjusted to the smoothness of initial data of the given problem. 
The implementation details and computation complexity of the proposed numerical method have been also discussed.
Due to its general formulation, the developed method can be applied to a wide variety of time-dependent problems without constraints on dimensionality or the structure of stationary state space. 
The method can also be used in conjunction with space discretization methods to obtain a fully-discrete numerical scheme. 
	\onecolumngrid

	\def\bibpath{./bib}
	\bibliographystyle{apsrev4-1}
	\bibliography{%
	\bibpath/GMV,%
	\bibpath/polynomial_interpolation,%
	\bibpath/nonlocal_in_time,%
	\bibpath/nonlocal_schrodinger,%
	\bibpath/operator_calculus,%
	\bibpath/sytnyk,%
	\bibpath/sinc_refinement,%
	\bibpath/lin_sys,%
	\bibpath/kp%
	}
\end{document}